\newtheorem{thm}{Theorem}[section]
\newtheorem{lemm}[thm]{Lemma}
\newtheorem{prop}[thm]{Proposition}
\theoremstyle{remark}
\newtheorem{rmk}[thm]{Remark}
\theoremstyle{definition}
\newtheorem{defi}[thm]{Definition}
\title{A gap theorem for positive Einstein metrics\\ on the four-sphere}
\author{Kazuo Akutagawa${}^*$}
\email{akutagawa@math.titech.ac.jp}
\address{Department of Mathematics, Tokyo Institute of Technology, 
Tokyo 152-8551, Japan}
\author{Hisaaki Endo${}^{**}$}
\email{endo@math.titech.ac.jp}
\address{Department of Mathematics, Tokyo Institute of Technology, 
Tokyo 152-8551, Japan}
\author{Harish Seshadri}
\email{harish@math.iisc.ernet.in}
\address{Mathematics Department, Indian Institute of Science, 560012 Bangalore, India}
\thanks{${}^*$\ 
supported in part by the Grants-in-Aid for Scientific Research (B), 
Japan Society for the Promotion of Science, No.~24340008.} 
\thanks{${}^{**}$\ 
supported in part by the Grants-in-Aid for Scientific Research (C), 
Japan Society for the Promotion of Science, No.~16K05142.} 
\date{January, 2018;\ \ February, 2018 (revised version).}
\begin{document} 
\maketitle
\markboth{A gap theorem for positive Einstein metrics on the four-sphere}
{Kazuo Akutagawa, Hisaaki Endo and Harish Seshadri}

\begin{abstract}
 We show that there exists a universal positive constant $\varepsilon_0 > 0$ with the following property:  Let $g$ be a positive Einstein metric on $S^4$.    If the  Yamabe constant of the conformal class $[g]$ satisfies
 $$ Y(S^4, [g]) >\frac{1}{\sqrt{3}} Y(S^4, [g_{\mathbb S}])  - \varepsilon_0\,, $$ 
where $g_{\mathbb S}$ denotes the standard round metric on $S^4$,  then,  up to rescaling, $g$ is isometric to $g_{\mathbb S}$. 
This is an extension of Gursky's gap theorem for positive Einstein metrics on the four-sphere.   
\end{abstract}
\maketitle
  
\section{Introduction and main results} 

A smooth Riemannian metric $g$ is said to be {\it Einstein} if its Ricci tensor ${\rm Ric}_g$ is a constant multiple $\lambda$ of $g$\,: 
$$ 
{\rm Ric}_g = \lambda\,g\,.
$$ 
When such a metric exists, it is natural to ask whether it is unique. 
However in dimension $n \geq 5$, there exist many examples of closed $n$-manifolds 
each of which has infinitely many non-homothetic Einstein metrics (cf.\,\cite{Besse}). 
In fact, there exists infinitely many non-homothetic
 Einstein metrics of positive sacalar curvature 
({\it positive Einstein} for brevity) on $S^n$ when $5 \le n \le 9$ \cite{Bohm} (cf.\,\cite{Jensen}, \cite{B-K}). 
There are no non-existence or uniqueness results known when $n \geq 5$. 

When $n = 4$, there are necessary topological conditions for a  closed $4$-manifold $M$ to admit an Einstein metric \cite{Thorpe}, \cite{Hitchin-1}, \cite{LeBrun-2}. 
Uniqueness is known in some special cases: when $M$ is a smooth compact quotient of real hyperbolic $4$-space ({\rm resp.} complex-hyperbolic $4$-space), 
the standard negative Einstein metric is the unique Einstein metric (up to rescaling and isometry) \cite{BCG} ({\rm resp.}\,\cite{LeBrun-1}). 
In the positive case, there are  some partial rigidity results on the $4$-sphere $S^4$ and the complex projective plane $\mathbb{CP}^2$ \cite{GL}, \cite{G}, \cite{Y}. 
When $M = S^4$, the standard round metric $g_{\mathbb{S}}$ of constant curvature $1$ is, to date, the only known Einstein metric (up to rescaling and isometry). 
In this connection we have the following gap theorem due to M.\,Gursky (see \cite{ABKS} for the significance of the constant $\frac{1}{\sqrt{3}} Y(S^4, [g_{\mathbb S}])$)\,:

\begin{thm}[M.\,Gursky\,\cite{G}]\label{Gursky} 
Let $g$ be a positive Einstein metric on $S^4$. 
If its Yamabe constant $Y(S^4, [g])$ satisfies the following inequality 
$$ 
Y(S^4, [g]) \geq \frac{1}{\sqrt{3}} Y(S^4, [g_{\mathbb S}]) 
$$ 
then, up to rescaling, $g$ is isometric to  $g_{\mathbb{S}}$. 
Here, $[g]$ denotes the conformal class of $g$. 
\end{thm}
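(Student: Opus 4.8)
The plan is to reduce the problem to showing that the Weyl tensor $W_g$ vanishes identically, and to extract this from a sharp integral estimate for the self-dual Weyl curvature. First I would normalize using the standard fact that a positive Einstein metric realizes the Yamabe invariant of its conformal class (Obata's theorem, together with the resolution of the Yamabe problem), so that $Y(S^4,[g]) = R_g\,\vol(g)^{1/2}$, where $R_g$ is the constant positive scalar curvature. Writing $E_g := \mathrm{Ric}_g - \tfrac14 R_g\,g$ for the trace-free Ricci tensor (which vanishes, as $g$ is Einstein) and decomposing $W_g = W_g^+ \oplus W_g^-$ into self-dual and anti-self-dual parts, the Gauss--Bonnet and Hirzebruch signature formulae for the oriented manifold $S^4$ read
$$8\pi^2\chi(S^4) = \int_{S^4}\Big(|W_g|^2 - \tfrac12|E_g|^2 + \tfrac1{24}R_g^2\Big)\,dV_g, \qquad 12\pi^2\tau(S^4) = \int_{S^4}\big(|W_g^+|^2 - |W_g^-|^2\big)\,dV_g.$$
Since $\chi(S^4)=2$, $\tau(S^4)=0$, and $E_g\equiv0$, these yield $\int_{S^4}|W_g^+|^2\,dV_g = \int_{S^4}|W_g^-|^2\,dV_g = 8\pi^2 - \tfrac1{48}Y(S^4,[g])^2$. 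Using $Y(S^4,[g_{\mathbb S}]) = 8\sqrt6\,\pi$, the hypothesis $Y(S^4,[g])\ge\tfrac1{\sqrt3}Y(S^4,[g_{\mathbb S}])$ therefore forces $\int_{S^4}|W_g^+|^2\,dV_g \le \tfrac{16}{3}\pi^2$. It then suffices to prove a matching lower bound forcing $W_g^+\equiv0$ (and, by the identical argument for the reversed orientation, $W_g^-\equiv0$), since a conformally flat Einstein metric on the simply connected $S^4$ has constant sectional curvature and is hence round.

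The analytic heart is a Bochner argument for $\phi:=|W_g^+|$. Because $g$ is Einstein, the second Bianchi identity gives $\mathrm{div}\,W_g^+=0$, and, viewing $W_g^+$ as a trace-free symmetric endomorphism of the bundle $\Lambda^+$ of self-dual $2$-forms, one has Derdzinski's Weitzenböck formula
$$\tfrac12\Delta|W_g^+|^2 = |\nabla W_g^+|^2 - \tfrac{R_g}{2}|W_g^+|^2 + c_0\det W_g^+,$$
for a universal constant $c_0$, with $\Delta=\mathrm{div}\,\nabla$. I would then combine three ingredients: the refined Kato inequality $|\nabla W_g^+|^2\ge\tfrac53|\nabla\phi|^2$; the sharp algebraic inequality $|\det A|\le\tfrac1{3\sqrt6}|A|^3$ for trace-free symmetric $3\times3$ matrices $A$, with equality exactly when $A$ has a repeated eigenvalue; and, after integrating the resulting differential inequality for $\phi$, the Yamabe--Sobolev inequality $Y(S^4,[g])\big(\int_{S^4}\phi^4\,dV_g\big)^{1/2}\le\int_{S^4}\big(6|\nabla\phi|^2 + R_g\phi^2\big)\,dV_g$. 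Feeding the integrated inequality into Yamabe--Sobolev and using the Gauss--Bonnet value of $\int_{S^4}|W_g^+|^2$ to eliminate $R_g$ and $\int_{S^4}\phi^2$ in favour of $Y(S^4,[g])$, one obtains a sharp integral inequality in which the value $Y(S^4,[g]) = \tfrac1{\sqrt3}Y(S^4,[g_{\mathbb S}])$ appears as the exact borderline: strictly above it the inequality forces $\int_{S^4}|W_g^+|^2\,dV_g = 0$. I expect the calibration of these constants---so that the threshold is exactly $\tfrac1{\sqrt3}Y(S^4,[g_{\mathbb S}])$ rather than something larger---to be the main obstacle, since the refined Kato factor $\tfrac53$, the constant $\tfrac1{3\sqrt6}$ in the cubic inequality, and the sharp Yamabe constant must all be used at full strength; any lossy step shifts the gap.

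Finally I would treat the borderline case, where the hypothesis holds with equality and the chain of inequalities is saturated. Saturation of the algebraic inequality forces $W_g^+$ to have a repeated eigenvalue at each point, and by Derdzinski's rigidity a nonvanishing $W_g^+$ of this special form would endow $S^4$ with a Kähler--Einstein structure, contradicting the fact that $S^4$ admits no almost complex structure. Hence $W_g^+\equiv0$; the same argument for the opposite orientation gives $W_g^-\equiv0$, so $W_g\equiv0$, and the reduction of the first step completes the proof.
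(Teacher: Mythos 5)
The paper does not actually prove this statement; it quotes it from Gursky \cite{G} and describes that proof in one line as involving ``a sophisticated Bochner technique, a modified scalar curvature and a conformal rescaling argument.'' Your outer architecture matches Gursky's: the Gauss--Bonnet/signature computation giving $\int_{S^4}|W_g^+|^2\,dV_g=8\pi^2-\tfrac{1}{48}Y^2\le\tfrac{16}{3}\pi^2$ under the hypothesis, the reduction to a sharp lower bound $\int|W_g^+|^2\ge\tfrac{1}{24}Y^2$ for $W_g^+\not\equiv0$, the borderline analysis via Derdzinski's theorem and the non-existence of almost complex structures on $S^4$, and the final step (conformally flat Einstein on the simply connected $S^4$ is round) are all correct. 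The gap is in the analytic core. If you integrate the Weitzenb\"ock inequality for $\phi=|W_g^+|$ and then apply the Yamabe--Sobolev inequality \emph{to $\phi$ itself}, the constants provably do not close: writing $a=\int|\nabla\phi|^2$, $b=\int\phi^2$, your chain (Weitzenb\"ock, refined Kato, determinant inequality, H\"older, Sobolev) gives
$$
\tfrac{5}{3}a+\tfrac{R}{2}b\ \le\ \sqrt{6}\int\phi^3\ \le\ \sqrt{6}\,b^{1/2}\Big(\int\phi^4\Big)^{1/2}\ \le\ \frac{\sqrt{6}}{Y}\,b^{1/2}\,(6a+Rb),
$$
and since $\tfrac53a+\tfrac R2 b\ge\lambda(6a+Rb)$ holds only for $\lambda\le\tfrac{5}{18}$ (the ratio $a/b$ is not under control), the best unconditional conclusion is $\int|W_g^+|^2\ge\tfrac{25}{1944}Y^2$. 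The needed sharp constant is $\tfrac{1}{24}=\tfrac{81}{1944}$, so your bound is weaker by a factor of $81/25$ and yields no contradiction with the Gauss--Bonnet value at the threshold $Y=8\sqrt{2}\pi$. This is exactly the ``calibration'' issue you flag, and it is not a matter of bookkeeping: the step as proposed fails.

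The missing idea is the modified scalar curvature $R_g-2\sqrt{6}\,|W_g^+|$ and the accompanying conformal rescaling. Concretely, one must test the Sobolev inequality not with $\phi$ but with $v=\phi^{1/3}$: multiplying the pointwise inequality $\phi\,\Delta\phi\ge\tfrac{2}{3}|\nabla\phi|^2+\tfrac{R}{2}\phi^2-\sqrt{6}\,\phi^3$ (where the $\tfrac23$ comes from the refined Kato constant $\tfrac53$ after expanding $\tfrac12\Delta\phi^2$) by $\phi^{-4/3}$ and integrating converts the gradient coefficient into exactly the conformal--Laplacian coefficient, giving $\int\big(6|\nabla v|^2+(R-2\sqrt{6}\,\phi)v^2\big)\le0$; then
$$
Y\Big(\int v^4\Big)^{1/2}\le\int\big(6|\nabla v|^2+Rv^2\big)\le 2\sqrt{6}\int\phi\,v^2\le 2\sqrt{6}\Big(\int\phi^2\Big)^{1/2}\Big(\int v^4\Big)^{1/2},
$$
which is the sharp $\int|W_g^+|^2\ge Y^2/24$ (with equality forcing $\phi$ constant and $W_g^+$ degenerate, whence Derdzinski applies). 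One must also justify the integration by parts across the zero set of $\phi$, e.g.\ by replacing $\phi$ with $\phi+\epsilon$. Without this substitution --- which is precisely the ``modified scalar curvature and conformal rescaling'' the paper attributes to Gursky --- your proposal does not reach the stated threshold.
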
 

Note that $Y(S^4, [h]) \leq Y(S^4, [g_{\mathbb{S}}]) = 8\sqrt{6}\pi$ for any Riemannian metric $h$ and that $Y(S^4, [g]) = R_g \sqrt{V_g}$ for any Einstein metric $g$, 
where $R_g$ and $V_g = {\rm Vol}(S^4, g)$ denote respectively the scalar curvature of $g$ and the volume of $(S^4, g)$.  

Our main result in this paper is an extension of Theorem\,\ref{Gursky}:

\begin{thm}\label{MainThm1} 
There exists a universal positive constant $\varepsilon_0 > 0$ with the following property\,$:$ 
If $g$ is a positive Einstein metric on $S^4$ 
with Yamabe constant 
$$ 
Y(S^4, [g]) >\frac{1}{\sqrt{3}} Y(S^4, [g_{\mathbb S}])  - \varepsilon_0, 
$$ 
then, up to rescaling, $g$ is isometric to $g_{\mathbb{S}}$. 
\end{thm} 

This result can be restated in terms of the {\it Weyl constant} of $[g]$ (cf.\,\cite{ABKS}). 
Indeed, the Chern-Gauss-Bonnet theorem (see Remark\,\ref{ALE}-(1)) implies that the lower bound on the Yamabe constant 
is equivalent to the following upper bound on the Weyl constant\,:  $\int_M |W_g|^2 d  \mu_g < \frac{32}{3} \pi^2 + \widetilde{\varepsilon}_0$, 
where $\widetilde{\varepsilon}_0 := \frac{\varepsilon_0}{24}(16\sqrt{2}\pi - \varepsilon_0) > 0$. 
 
More generally, we obtain the following (note that $8\sqrt{2}\pi = \frac{1}{\sqrt{3}} Y(S^4, [g_{\mathbb S}])$):

\begin{thm}\label{MainThm2} 
For $c > 0$, let $\mathcal{E}_{\geq c}(S^4)$ 
denote the space of all unit-volume positive Einstein metrics $g$ on $S^4$ with $c \leq Y(S^4, [g]) < 8\sqrt{2}\pi$. 
Then the number of connected components of the moduli space $\mathcal{E}_{\geq c}(S^4)/{\rm Diff}(S^4)$  is finite. 
In particular,  $\{ Y(S^4, [g]) \in [c, 8\sqrt{2}\pi)\ |\ g \in \mathcal{E}_{\geq c}(S^4) \}$ is a finite set $($possible empty$)$.  
\end{thm}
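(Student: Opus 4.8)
The plan is to combine a priori geometric bounds with the orbifold compactness theory for Einstein four-manifolds, and then to upgrade precompactness to genuine $C^\infty$-compactness by excluding degeneration throughout the range $Y(S^4,[g]) < 8\sqrt{2}\pi$. Throughout I normalize $\vol(S^4,g)=1$, so that $R_g = Y(S^4,[g]) \in [c, 8\sqrt{2}\pi)$ and the Einstein constant $\lambda_g = R_g/4$ lies in $[c/4, 2\sqrt{2}\pi)$, bounded away from $0$ and $\infty$. First I would collect the a priori bounds making the moduli space $\mathcal M := \mathcal E_{\geq c}(S^4)/\mathrm{Diff}(S^4)$ precompact. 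Since $\mathrm{Ric}_g = \lambda_g\, g \geq (c/4)\, g > 0$, Myers' theorem gives the uniform diameter bound $\mathrm{diam}(S^4,g)\le \pi\sqrt{12/c}$; the Chern--Gauss--Bonnet formula together with the vanishing of the trace-free Ricci tensor gives $\int_{S^4}|W_g|^2\,d\mu_g = 16\pi^2 - \tfrac{1}{24}R_g^2$, hence $\int_{S^4}|Rm_g|^2\,d\mu_g = 16\pi^2 + \tfrac{1}{8}R_g^2 < 32\pi^2$, a uniform curvature bound; and the Ricci lower bound together with unit volume and the diameter bound yields, via Bishop--Gromov, a uniform noncollapsing estimate for metric balls.

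With these bounds in hand, the second step invokes the orbifold compactness theorem for Einstein four-manifolds (Anderson, Bando--Kasue--Nakajima, Tian, Nakajima): any sequence $(S^4,g_i)$ with $g_i\in\mathcal E_{\geq c}(S^4)$ subconverges in the Gromov--Hausdorff sense to a compact Einstein orbifold $(X_\infty,g_\infty)$ with finitely many singular points, the convergence being $C^\infty$ modulo diffeomorphisms on compact subsets of the regular part; at each singular point finitely many complete Ricci-flat ALE bubbles split off, each carrying curvature energy at least a universal $\varepsilon_0>0$. In particular $\mathcal M$ is precompact in the $C^\infty$-orbifold topology.

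The crux, and the step I expect to be the main obstacle, is to show that in the range $Y < 8\sqrt{2}\pi$ no such degeneration can occur, so that the limit is a \emph{smooth} Einstein metric on $S^4$ and the convergence is globally $C^\infty$ modulo diffeomorphisms. Here I would argue by contradiction: a nontrivial bubble is a Ricci-flat ALE space asymptotic to $\R^4/\Gamma$ with $\Gamma\neq\{1\}$, while $X_\infty$ is a positive Einstein orbifold (its Einstein constant is $\lim_i \lambda_i \ge c/4 >0$). The topological reconstitution of $S^4$ forces the values $\chi=2$ and $\tau=0$ to be distributed between $X_\infty$ and the bubbles through the orbifold Chern--Gauss--Bonnet and signature formulae; combined with the positivity of $R_{g_\infty}$ (which constrains the orbifold topology via an orbifold Hitchin--Thorpe inequality) and the energy window $\tfrac{32}{3}\pi^2 < \int_{S^4}|W_{g_i}|^2\,d\mu_{g_i} \le 16\pi^2 - \tfrac{c^2}{24}$, this accounting leaves only finitely many a priori configurations, each of which I expect to be incompatible with the strict lower bound $\int|W|^2 > \tfrac{32}{3}\pi^2$. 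The threshold $\tfrac{32}{3}\pi^2 = 16\pi^2 - \tfrac{1}{24}(8\sqrt{2}\pi)^2$ is exactly the value at which this balance becomes delicate, which is why it governs the hypothesis $Y<8\sqrt{2}\pi$. This is the hard analytic and topological core of the argument.

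Granting that no degeneration occurs, every sequence in $\mathcal M$ subconverges to a smooth Einstein metric $g_\infty$ on $S^4$ with $Y(S^4,[g_\infty])\in[c,8\sqrt{2}\pi]$; were this limiting value equal to $8\sqrt{2}\pi$, Gursky's theorem (Theorem~\ref{Gursky}) would force $g_\infty$ to be round, contradicting $Y(S^4,[g_{\mathbb S}])=8\sqrt{6}\pi\neq 8\sqrt{2}\pi$, so in fact $g_\infty\in\mathcal E_{\geq c}(S^4)$ and $\mathcal M$ is compact in the $C^\infty$ topology modulo diffeomorphism. By Koiso's theorem the premoduli space of Einstein metrics near any $g$ is cut out by a real-analytic Kuranishi map on the finite-dimensional space of infinitesimal Einstein deformations, so $\mathcal M$ is locally a finite-dimensional real-analytic variety and in particular locally (path-)connected. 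A compact, locally connected space has open connected components, hence only finitely many of them, which proves the finiteness of components. Finally, for the ``in particular'' statement I would note that unit-volume Einstein metrics are exactly the critical points of the total scalar curvature functional $\mathcal S(g)=\int_{S^4}R_g\,d\mu_g$ restricted to unit volume, with critical value $\mathcal S(g)=R_g=Y(S^4,[g])$; along any path $g_t$ lying in the critical set one has $\tfrac{d}{dt}\mathcal S(g_t)=d\mathcal S_{g_t}(\dot g_t)=0$, so $Y$ is constant on each connected component of $\mathcal M$. Finitely many components therefore force $\{\,Y(S^4,[g])\ |\ g\in\mathcal E_{\geq c}(S^4)\,\}$ to be a finite set.
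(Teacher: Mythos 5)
Your reduction of the theorem to a compactness statement, followed by Koiso's real--analytic local structure of the premoduli space, local connectedness, and local constancy of $Y$ on the critical set, is essentially the same endgame as the paper's, which argues by contradiction from a sequence meeting infinitely many components and invokes its Proposition~1.4 together with \cite[Theorem\,3.1]{Koiso-2}. Your extra care in checking that the limit actually lies in $\mathcal{E}_{\geq c}(S^4)$ (via Gursky's theorem) is correct and mirrors the paper's proof of Theorem~1.2.

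The genuine gap is the step you yourself flag as ``the hard analytic and topological core'': the exclusion of orbifold degeneration. This is exactly Proposition~1.4 of the paper, and your sketch of it points in the wrong direction. You propose to rule out bubbling using the energy window $\int|W_{g_i}|^2\,d\mu_{g_i} > \tfrac{32}{3}\pi^2$ coming from the hypothesis $Y < 8\sqrt{2}\pi$, but that upper bound plays no role in the paper's degeneration analysis --- only the lower bound $Y \geq c$ (hence $R_{g_\infty} \geq c > 0$) is used there. What actually closes the argument is: (i) the Crisp--Hillman embedding theorem, which says that the only spherical space forms $S^3/\Gamma$ embedding smoothly in $S^4$ are $S^3$ and $S^3/Q_8$, so every nontrivial bubble group is forced to be $Q_8$; (ii) a Mayer--Vietoris computation showing the relevant Euler characteristics equal $1$, so that if $\Gamma_a=\{1\}$ at every singular point then $\mathscr{E}(g_\infty)=2$ and the energy inequality $2 \geq 2+\sum_a\mathscr{E}(h_a)>2$ is already a contradiction, while a $Q_8$ bubble carries energy exactly $\chi(X)-1/|Q_8|=7/8$; and (iii) the orbifold signature theorem combined with the eta-invariant value $\eta_S(S^3/Q_8)=\tfrac34$, which bounds $\int|W^+_{g_\infty}|^2\,d\mu_{g_\infty}$ from below and yields the contradiction $\tfrac98 \geq \mathscr{E}(g_\infty) > \cdots \geq \tfrac98$. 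None of these inputs appear in your sketch; without them the ``finitely many a priori configurations'' you hope to exclude are neither enumerated nor excluded, and the Weyl-energy window alone does not suffice. As written, the compactness step --- and hence the proof of the theorem --- is incomplete.
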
 
Here $ \mathcal{M}_1(S^4)/{\rm Diff}(S^4)$ has the $C^\infty$-topology  and  $\mathcal{E}_{\geq c}(S^4)/{\rm Diff}(S^4)$  is endowed with the subspace topology. 

These theorems follow from the following crucial result:

\begin{prop}\label{MainProp} 
Let $\{g_i\}$ be a sequence in $  \mathcal{E}_{\geq c}(S^4)$ for some positive constant $c > 0$. 
Then there exists a subsequence $\{j\} \subset \{i\}$, $\{\phi_j\} \subset {\rm Diff}(S^4)$ 
and a unit-volume positive Einstein metric $g_{\infty}$ on $S^4$ 
such that $\phi_j^*g_j$ converges to $g_{\infty}$ with respect to the $C^{\infty}$-topology on $\mathcal{M}_1(S^4)$. 
\end{prop}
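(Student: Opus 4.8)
The plan is to recognize Proposition \ref{MainProp} as an instance of the orbifold compactness theory for Einstein four-manifolds (in the spirit of Anderson, Bando--Kasue--Nakajima and Tian), and then to show that in the present situation no singular point can actually form, so that the Gromov--Hausdorff limit is in fact a smooth Einstein metric on $S^4$ and the convergence is global and $C^\infty$.

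First I would collect the uniform geometric bounds forced by the hypothesis $g_i \in \mathcal{E}_{\geq c}(S^4)$. Since the metrics have unit volume and $Y(S^4,[g_i]) = R_{g_i}\sqrt{V_{g_i}}$, the normalization gives $R_{g_i} = Y(S^4,[g_i]) \in [c, 8\sqrt{2}\pi)$; as each $g_i$ is Einstein, $\mathrm{Ric}_{g_i} = \tfrac{R_{g_i}}{4}\,g_i$ is a positive constant multiple of $g_i$, uniformly bounded above and below. Myers' theorem then yields a uniform upper bound on $\mathrm{diam}(S^4,g_i)$, while the lower Yamabe bound $Y \geq c$ gives a uniform Sobolev constant and hence volume non-collapsing, i.e. $\vol(B(x,r)) \geq \kappa r^4$ on a definite scale. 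Finally, the Chern--Gauss--Bonnet theorem together with the vanishing of the trace-free Ricci tensor gives $\int_{S^4}|W_{g_i}|^2\,d\mu_{g_i} = 16\pi^2 - \tfrac{1}{24}Y(S^4,[g_i])^2$, which is uniformly bounded; and since $|Rm|^2 = |W|^2 + \tfrac{1}{6}R^2$ for an Einstein four-manifold, the full $L^2$ norm of curvature is uniformly bounded as well.

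With a uniform bound on $\int|Rm_{g_i}|^2$, a uniform diameter bound, and non-collapsing, the sequence falls precisely within the scope of the orbifold compactness theorem. After passing to a subsequence and pulling back by suitable diffeomorphisms $\phi_j$, the metrics converge in the Gromov--Hausdorff sense to an Einstein orbifold $(X_\infty, g_\infty)$ with at most finitely many isolated singular points $p_1,\dots,p_k$, the convergence being $C^\infty$ on compact subsets of the smooth part $X_\infty \setminus \{p_1,\dots,p_k\}$. The mechanism is the $\varepsilon$-regularity estimate: where the scale-invariant local energy $\int_{B(x,r)}|Rm|^2$ is small, curvature is pointwise controlled, so only finitely many points of curvature concentration occur; away from them the Einstein equation is elliptic in harmonic coordinates and bootstraps to $C^\infty$ bounds, while at each $p_j$ the rescaled metrics bubble off a Ricci-flat ALE space with group $\Gamma_j \subset SO(4)$ at infinity.

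The decisive step, and the main obstacle, is to rule out every such singular point, so that $X_\infty$ is a smooth manifold, the convergence is global $C^\infty$, and $(X_\infty, g_\infty)$ is diffeomorphic to $S^4$ with a smooth unit-volume positive Einstein metric. This is genuinely delicate, since orbifold degenerations do occur for general sequences of Einstein four-manifolds, and they must be excluded here using the specific numerology of $S^4$. I would combine the orbifold Chern--Gauss--Bonnet and signature formulas---which relate $\chi(S^4)=2$ and $\tau(S^4)=0$ to the invariants of $X_\infty$ and of the bubbles through defect terms $\sum_j(1 - |\Gamma_j|^{-1})$---with the energy gap for nonflat Ricci-flat ALE spaces (the minimal bubble, Eguchi--Hanson, already carries $\int|Rm|^2 = 12\pi^2$). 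The total budget $\int_{S^4}|W_{g_i}|^2 = 16\pi^2 - \tfrac{1}{24}Y(S^4,[g_i])^2 < 16\pi^2$ then leaves no room, compatibly with positivity and the signature constraint, to split off a bubble while still supporting an Einstein orbifold in the limit, forcing $k=0$. Once singularity formation is excluded, $\phi_j^*g_j \to g_\infty$ holds in the $C^\infty$-topology on the fixed manifold $S^4$, and $g_\infty$ is the desired unit-volume positive Einstein metric, completing the proof.
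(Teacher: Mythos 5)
Your setup (uniform Ricci, diameter, non-collapsing and $L^2$-curvature bounds, followed by the Anderson/Bando--Kasue--Nakajima orbifold compactness theorem) matches the paper exactly. The problem is the decisive third step, where you have a plan rather than a proof, and the plan as stated would not close. The Chern--Gauss--Bonnet ``budget'' alone gives \emph{no} contradiction: for a bubble tree the accounting $\chi(S^4)=\chi(M_\infty)-\ell+\sum_a\chi(X_a)$ together with $\mathscr{E}(g_\infty)=\chi(M_\infty)-\sum_a(1-|\Gamma_a|^{-1})$ and $\mathscr{E}(h_a)=\chi(X_a)-|\Gamma_a|^{-1}$ balances exactly, so ``no room in the budget'' is not true at the level of the Euler characteristic. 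Moreover your quantitative anchor is wrong: the relevant bubble is not Eguchi--Hanson with $\int|Rm|^2=12\pi^2$ (its end $\mathbb{RP}^3$ cannot even occur here), and non-flat Ricci-flat ALE spaces with much smaller energy (e.g.\ $\chi=1$, energy $1-|\Gamma|^{-1}$) cannot be excluded a priori --- the paper's own Remark 4.2 exhibits the free $\mathbb{Z}_2$-quotient of Eguchi--Hanson with $\chi=1$ precisely to make this point.

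What actually closes the argument in the paper, and is absent from your proposal, is twofold. First, a topological input specific to $S^4$: the Crisp--Hillman embedding theorem, which says that the only quotients $S^3/\Gamma$ that embed smoothly in $S^4$ are $S^3$ and $S^3/Q_8$. Combined with a Mayer--Vietoris argument (needed separately to rule out the case where all orbifold groups are trivial, which is subtle because the group at infinity of the deepest bubble need not equal the orbifold group, so positive mass does not apply directly), this pins down $\Gamma=Q_8$, $\chi(X_1)=1$, and hence the bubble energy $\mathscr{E}(h_1)=1-\tfrac18=\tfrac78$. Second, the signature theorem for orbifolds with the eta-invariant $\eta_S(S^3/Q_8)=\tfrac34$: since $\tau(S^4_\infty)=0$, this forces $\frac{1}{8\pi^2}\int|W^+_{g_\infty}|^2\geq\tfrac98$, while the energy inequality gives $\mathscr{E}(g_\infty)\leq 2-\tfrac78=\tfrac98$ and $R_{g_\infty}>0$ gives the strict inequality $\mathscr{E}(g_\infty)>\frac{1}{8\pi^2}\int|W_{g_\infty}|^2$. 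It is only this combination --- Euler characteristic, signature/eta, and positivity of scalar curvature --- that produces the contradiction $\tfrac98>\tfrac98$; the Gauss--Bonnet count by itself, which is all your sketch invokes concretely, does not.
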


\noindent {\bf Remark:} Theorem\,D of  \cite{Anderson} states that the same conclusion as the one in Proposition\,\ref{MainProp} holds 
for any sequence $\{g_i\} \subset \mathcal{E}_{\geq c}(M)$ on any closed $4$-manifold $M$ with $1 \leq \chi(M) \leq 3$, 
where $\chi(M)$ denotes the Euler characteristic of $M$. 
Unfortunately, the proof appears to be incorrect. Specifically, Theorem D is based on Lemma 6.3, which asserts that  
a Ricci-flat ALE 4-space $X$ with $\chi(X)=1$ is necessarily isometric to the Euclidean $4$-space $({\mathbb R}^4, g_{\mathbb{E}})$.
This is not true: the Ricci-flat ALE 4-space $X_1$ constructed by Eguchi-Hanson \cite{EH} has a free, isometric ${\mathbb Z}_2$-action 
whose quotient  $X_2 = X_1/{\mathbb Z}_2$ is a Ricci-flat ALE $4$-space with $\chi(X_2)=1$. 
Note that $X_2$ is nonorientable. 
Even if we assume that $X$ is orientable in Lemma\,6.3, the topological argument in the proof still contains some gaps. 
Proposition\,3.10 of \cite{Anderson-GAFA} corrects a minor inaccuracy of Lemma\,6.3. 
However, the proof also contains some gaps in the topological argument (see Remark\,\ref{Counter} in $\S$\,4 for details).

\vspace{2mm}

Gursky's proof of Theorem\,\ref{Gursky}  involves a sophisticated Bochner technique, 
a modified scalar curvature and a conformal rescaling argument.  The proof of Proposition\,1.4 
is based on topological results about $S^3$-quotients embedded in $S^4$ 
and the convergence theory of Einstein metrics in four-dimensions.  
Given this proposition, we invoke Gursky's result to prove Theorems\,\ref{MainThm1} and\,\ref{MainThm2}.

In $\S$\,2, we recall some background material and prove Theorems\,\ref{MainThm1} and\,\ref{MainThm2}, assuming Proposition\,1.4.
In $\S$\,3, we review two key results needed for the proof of Proposition\,1.4. 
Finally, in $\S$\,4, we prove Proposition\,1.4.  
\quad \\ 
\quad \\ 
\noindent
{\bf Acknowledgements.} 
The authors would like to thank Anda Degeratu and Rafe Mazzeo for valuable discussions on the eta invariant, 
and Shouhei Honda for helpful discussions on convergence results of Riemannian manifolds with bounded Ricci curvature. 
They would also like to thank Matthew Gursky and Claude LeBrun for useful advices, 
and Gilles Carron for crucial comments. \\

\section{Preliminaries and proofs of Theorems 1.2 and 1.3} 

We first review the definitions of Yamabe constants and Yamabe metrics. 
Let $M^n$ be a closed $n$-manifold with $n \geq 3$. 
It is well known that a Riemannian metric on $M$ is Einstein if and only if it is a critical point of 
the normalized Einstein-Hilbert functional $I$ on the space $\mathcal{M}(M)$ of all Riemannian metrics on $M$
$$ 
I : \mathcal{M}(M) \rightarrow \mathbb{R},\quad g \mapsto I(g) := \frac{\int_MR_gd\mu_g}{{\rm Vol}(M, g)^{(n-2)/n}}, 
$$ 
where $d\mu_g$ denotes the volume form of $g$. 
The restriction of $I$ to any conformal class $[g] := \{ e^{2f}\,g\ |\ f \in C^{\infty}(M) \}$ is always bounded from below. 
Hence, we can consider the following conformal invariant  
$$
Y(M, [g]) := \inf_{\widetilde{g} \in [g]}I(\widetilde{g}), 
$$
which is called the {\it Yamabe constant} of $(M, [g])$. 
A remarkable theorem of H.\,Yamabe, N.\,Trudinger, T.\,Aubin and R.\,Schoen asserts that 
each conformal class $[g]$ contains metrics $\check{g}$, called {\it Yambe metrics}, 
which realize the minimum (cf.\,\cite{LP}, \cite{Sc-1}) 
$$ 
Y(M, [g]) = I(\check{g}).
$$
These metrics must have constant scalar curvature 
$$ 
R_{\check{g}} = Y(M, [g])\cdot V_{\check{g}}^{-2/n}, 
$$
where $V_{\check{g}} = {\rm Vol}(M, \check{g})$. 
Aubin proved that 
$$
Y(M^n, C) \leq Y(S^n, [g_{\mathbb{S}}]) = n(n-1) V_{g_{\mathbb{S}}}^{2/n}
$$
for any conformal class $C$ on $M$. 
Obata's Theorem\,\cite{Obata} implies that {\it any Einstein metric is a Yamabe metric}. 
When $n = 4$, 
$$
Y(M^4, [g]) = R_{\widehat{g}} \sqrt{V_{\widehat{g}}} \leq Y(S^4, [g_{\mathbb{S}}]) = 8\sqrt{6}\pi
$$  
for any Einstein metric $\widehat{g} \in [g]$. 

Assuming Proposition\,1.4, we can now prove Theorem\,\ref{MainThm1}. 

\begin{proof}[Proof of Theorem\,\ref{MainThm1}] 
Suppose that there exists a sequence $\{g_i\}$ of unit-volume Einstein metrics on $S^4$ satisfying 
$$ 
Y(S^4, [g_i]) = R_{g_i} < 8\sqrt{2}\pi\ \ ({\rm for}\ \ \forall i),\quad Y(S^4, [g_i]) = R_{g_i} \nearrow 8\sqrt{2}\pi\ \ ({\rm as}\ \ i \to \infty). 
$$ 
By Proposition\,1.4, 
there exists a subsequence $\{j\} \subset \{i\}$, a sequence $\{\phi_j\} \subset {\rm Diff}(S^4)$ 
and a unit-volume positive Einstein metric $g_{\infty}$ on $S^4$ 
such that $\phi_j^*g_j$ converges to $g_{\infty}$ with respect to the $C^{\infty}$-topology on $S^4$. 
Then, we get 
\begin{equation}\label{lll}
Y(S^4, [g_{\infty}]) = R_{g_{\infty}} = 8\sqrt{2}\pi. 
\end{equation}

On the other hand, Theorem\,\ref{Gursky} implies that $(S^4, g_{\infty})$ is isometric to $(S^4, g_{\mathbb{S}})$. 
Hence, 
$$ 
Y(S^4, [g_{\infty}]) = Y(S^4, [g_{\mathbb{S}}]) = 8\sqrt{6}\pi. 
$$
This contradicts (\ref{lll}). 
Therefore, there exists a positive constant $\varepsilon_0 > 0$ such that 
any unit-volume positive Einstein metric $g$ on $S^4$ satisfying 
$$ 
Y(S^4, [g]) > 8\sqrt{2}\pi - \varepsilon_0
$$ 
is isometric to $g_{\mathbb{S}}$. 
\end{proof}

\begin{proof}[Proof of Theorem\,\ref{MainThm2}] 
By the result of N. Koiso\,\cite[Theorem\,3.1]{Koiso-2} and \cite[Corollary\,12.52]{Besse} (cf.\,\cite[Theorem\,7.1]{Ebin}, \cite[Theorem\,2.2]{Koiso-1}), 
we first remark that, for each $g \in \mathcal{E}_{\geq c}(S^4)$, the premoduli space $\mathcal{E}_{\geq c}(S^4)$ around $g$ 
is a real analytic subset of a finite dimensional real analytic submanifold in $\mathcal{M}_1(S^4) := \{ g \in \mathcal{M}(S^4)\ |\ V_g = 1 \}$, 
and the moduli space $\mathcal{E}_{\geq c}(S^4)/{\rm Diff}(S^4)$ is arcwise connected. 
Moreover, the Yamabe constant $Y(S^4, [\bullet])$ is a locally constant function and
and it takes (at most) countably many values on $\mathcal{E}_{\geq c}(S^4)/{\rm Diff}(S^4)$. 

Suppose that there exist infinitely many connected components of the moduli space $\mathcal{E}_{\geq c}(S^4)/{\rm Diff}(S^4)$ 
(see \cite[Chapters\,4 and 12]{Besse} for the topology on it). 
Then, there exists a sequence $\{g_i\}$ in $\mathcal{E}_{\geq c}(S^4)$ such that 
the equivalence classes of any two $g_{i_1}$ and $g_{i_2}$ are contained in different connected components of $\mathcal{E}_{\geq c}(S^4)/{\rm Diff}(S^4)$ if $i_1 \ne i_2$. 
Similar to the proof of Theorem\,\ref{MainThm1}, 
there exists a subsequence $\{j\} \subset \{i\}$, a sequence $\{\phi_j\} \subset {\rm Diff}(S^4)$ 
and a unit-volume positive Einstein metric $g_{\infty}$ on $S^4$ 
such that $\phi_j^*g_j$ converges to $g_{\infty}$ with respect to the $C^{\infty}$-topology on $S^4$. 
We note here that the topology of the moduli space is induced from the one of the space $\mathcal{M}_1(S^4)$. 
Then, there exists a large positive integer $j_0$ such that the set $\{\phi_j^*g_j\}_{j \geq j_0}$ is contained in a connected component. 
This contradicts the choice of $\{g_i\}$. 
Hence, the number of connected components of the moduli space $\mathcal{E}_{\geq c}(S^4)/{\rm Diff}(S^4)$ is finite (possibly zero). 
In particular, the set of $\{ Y(S^4, [g]) \in [c, 8\sqrt{2}\pi)\ |\ g \in \mathcal{E}_{\geq c}(S^4) \}$ is a finite set $($possibly empty$)$.  
\end{proof}

\section{A review of two key results}

\subsection{An embedding theorem:}  It will be  necessary to know which quotients $S^3/\Gamma$ of $S^3$  embed smoothly  in $S^4$. 
The theorem below gives a complete answer, 
which is one of the two key results for the proof of Proposition\,1.4. 

\begin{thm}[Crisp-Hillman\,\cite{CH}]\label{Key-2} 
Let $\Gamma \subset SO(4)$ be a finite subgroup such that $S^3/\Gamma$ is a smooth quotient of $S^3$. 
If $S^3/\Gamma$ can be smoothly embedded in $S^4$, 
then either $\Gamma = \{1\}$ or $\Gamma = Q_8$. 
Here, $Q_8 = \{\pm 1, \pm i, \pm j, \pm k\}$ denotes the quaternion group. 
\end{thm}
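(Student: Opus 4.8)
The plan is to combine the classification of free linear actions on $S^3$ with the classical homological obstructions to embedding a closed $3$-manifold in $S^4$. Since $S^3/\Gamma$ is required to be a manifold, $\Gamma$ acts freely, so $\Gamma$ is one of the fixed-point-free finite subgroups of $SO(4)$ classified by Hopf and Wolf. Up to conjugacy these fall into a short list of families: the cyclic groups $C_n$, the dicyclic (binary dihedral) groups $Q_{4n}$, the binary tetrahedral, octahedral and icosahedral groups $2T,\,2O,\,2I$, and the metacyclic extensions and coprime products of these with cyclic factors. For each such $\Gamma$ the quotient $M_\Gamma := S^3/\Gamma$ is a closed oriented spherical space form, hence a rational homology $3$-sphere with $H_1(M_\Gamma;\Z) \cong \Gamma^{\mathrm{ab}}$. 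The proof then reduces to deciding, family by family, which $M_\Gamma$ admit a smooth embedding into $S^4$.

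The basic obstruction I would use is the following. If a closed oriented $3$-manifold $M$ embeds smoothly in $S^4$, it separates $S^4$ into two compact $4$-manifolds $W_1, W_2$ with $\partial W_i = M$ and $W_1 \cup_M W_2 = S^4$. Since $H_2(S^4;\Z) = H_1(S^4;\Z) = 0$, the Mayer--Vietoris sequence gives an isomorphism $H_1(M;\Z) \cong H_1(W_1;\Z) \oplus H_1(W_2;\Z)$, while Poincar\'e--Lefschetz duality together with the torsion linking form identifies $H_1(W_2;\Z) \cong H_1(W_1;\Z)$. Hence a necessary condition for $M_\Gamma \hookrightarrow S^4$ is that $\Gamma^{\mathrm{ab}} \cong A \oplus A$ for some finite abelian group $A$. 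When $H_1(M_\Gamma;\Z) = 0$ the two pieces $W_i$ are spin homology $4$-balls, so the additional Rokhlin obstruction $\mu(M_\Gamma) = 0$ applies.

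Running these two tests through the list disposes of almost everything. The condition $\Gamma^{\mathrm{ab}} \cong A \oplus A$ forces a cyclic abelianization to be trivial, so every nontrivial cyclic group $C_n$ and every metacyclic family with cyclic abelianization is eliminated at once; likewise $(2T)^{\mathrm{ab}} = \Z/3$ and $(2O)^{\mathrm{ab}} = \Z/2$ are not of the form $A \oplus A$, and the dicyclic groups $Q_{4n}$ with $n$ odd have $\Gamma^{\mathrm{ab}} = \Z/4$, again not of that form. The perfect homology sphere $M_{2I}$, the Poincar\'e sphere, survives the $A \oplus A$ test but is killed by Rokhlin, since $\mu(M_{2I}) = 1 \neq 0$. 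What remains is the trivial group together with the dicyclic groups $Q_{4n}$ with $n$ even, all of which have $\Gamma^{\mathrm{ab}} \cong (\Z/2)\oplus(\Z/2)$ and metabolic linking form; no homological invariant distinguishes among them.

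The hard part is therefore to single out $Q_8$ from the infinite family $Q_8, Q_{16}, Q_{24}, \dots$, where every homological and linking-form invariant agrees. For this I would pass from $H_1$ to $\pi_1$ and exploit the splitting $S^4 = W_1 \cup_{M_\Gamma} W_2$ at the level of fundamental groups: van Kampen forces the pushout of the maps $\pi_1(W_1) \leftarrow \Gamma \to \pi_1(W_2)$ to be trivial, which, combined with the Seifert-fibred structure of $M_\Gamma$ and the constraints that Poincar\'e--Lefschetz duality imposes on the finite groups $\pi_1(W_i)$, must be shown to be compatible with the group-theoretic structure of the dicyclic group $Q_{4n}$ only when $4n = 8$. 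I expect this separation of the even dicyclic groups to be the genuine obstacle, precisely because it is invisible to homology and the linking form and must instead exploit the detailed topology of spherical space forms and the way $\Gamma$ can be distributed across the two sides of the embedding. (The resulting list is sharp, since $M_{Q_8}$ does in fact embed in $S^4$; recalling such an embedding is not needed for the stated implication but confirms that $Q_8$ genuinely cannot be excluded.)
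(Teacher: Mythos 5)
First, note that the paper does not prove this statement at all: it is imported verbatim from Crisp--Hillman \cite{CH}, so the only question is whether your proposal actually constitutes a proof. It does not. Your homological sieve is correct and standard (it is Hantzsche's argument: an embedded closed oriented $3$-manifold splits $S^4$ into $W_1,W_2$ with $H_1(M;\mathbb{Z})\cong H_1(W_1;\mathbb{Z})\oplus H_1(W_2;\mathbb{Z})$, each summand a metabolizer of the linking form), your computation of the abelianizations of the fixed-point-free subgroups of $SO(4)$ is right, and the Rokhlin argument disposing of the Poincar\'e sphere is correct. But you yourself identify the entire content of the Crisp--Hillman theorem --- eliminating $S^3/Q_{4n}$ for even $n\geq 4$, where $\Gamma^{\mathrm{ab}}\cong(\mathbb{Z}/2)\oplus(\mathbb{Z}/2)$ and the linking form is metabolic for every such $n$ --- and then do not prove it. Saying that a van Kampen analysis ``must be shown to be compatible \dots only when $4n=8$'' is a statement of the theorem, not an argument for it.

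Moreover, the route you sketch for that step has a concrete flaw: you propose to use ``the constraints that Poincar\'e--Lefschetz duality imposes on the finite groups $\pi_1(W_i)$,'' but there is no reason for $\pi_1(W_i)$ to be finite. The maps $\pi_1(M_\Gamma)\to\pi_1(W_i)$ need not be injective or surjective, and the only a priori constraint from van Kampen is that the pushout of $\pi_1(W_1)\leftarrow\Gamma\rightarrow\pi_1(W_2)$ is trivial, which by itself does not bound $\pi_1(W_i)$; the complementary regions of a $3$-manifold in $S^4$ can have infinite, even non-residually-finite, fundamental groups. So the one step that is invisible to $H_1$ and the linking form --- precisely the step for which one must cite \cite{CH} or reproduce their (substantially more delicate) analysis --- is missing, and the sketched replacement would not close it.
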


\subsection{Convergence of Einstein metrics:}

We first review the definition of the energy of metrics on $4$-manifolds.

\begin{defi} $(1)$ \ For a closed Riemannian $4$-manifold $(M, g)$, 
the {\it energy} $\mathscr{E}(g)$ of $g$ (or $(M, g)$) is defined by 
$$
\mathscr{E}(g) := \frac{1}{8\pi^2}\int_M|\mathscr{R}_g|^2d\mu_g,  
$$ 
where $\mathscr{R}_g = (R^i_{\ jk\ell})$ denotes the curvature tensor of $g$ and $|\mathscr{R}_g|^2 = \frac{1}{4}R^i_{\ jk\ell}R_i^{\ jk\ell}$. \vspace{2mm}

$(2)$ \ If $(X,h)$ is an  {\it asymptotically locally Euclidean} $4$-manifold of order $\tau > 0$ (ALE $4$-space for brevity, cf.\,\cite{BKN}), 
the energy $\mathscr{E}(h)$ of $h$ (or $(X, h)$) is again defined by 
$$
\mathscr{E}(h) := \frac{1}{8\pi^2}\int_X|\mathscr{R}_h|^2d\mu_h < \infty.    
$$ 
\end{defi}
\vspace{2mm}

\begin{rmk}\label{ALE} 
$(1)$ \ By the Chern-Gauss-Bonnet formula, $\mathscr{E}(g) = \chi(M)$ for any Einstein metric $g$ on a closed $4$-manifold $M$. 
Indeed, 
\begin{align*}
\mathscr{E}(g) &= \frac{1}{8\pi^2}\int_M|\mathscr{R}_g|^2d\mu_g = \frac{1}{8\pi^2}\int_M(|W_g|^2 + \frac{1}{24}R_g^2 + \frac{1}{2}|E_g|^2)d\mu_g\\ 
&= \frac{1}{8\pi^2}\int_M(|W_g|^2 + \frac{1}{24}R_g^2 - \frac{1}{2}|E_g|^2)d\mu_g = \chi(M),
\end{align*}
where $W_g = (W^i_{\ jk\ell})$ and $E_g = (E_{ij})$ denote respectively the Weyl tensor and the trace-free Ricci tensor ${\rm Ric}_g - \frac{R_g}{4}g$ of $g$, 
and $|W_g|^2 = \frac{1}{4}W^i_{\ jk\ell}W_i^{\ jk\ell}$. 
In particular, $\mathscr{E}(g) = 2$ if $M = S^4$.\\ 
$(2)$ \ The Chern-Gauss-Bonnet formula  for $4$-manifolds with boundary implies 
the following (cf.\,\cite[formula\,(7)]{Hitchin-2}): 
any Ricci-flat ALE $4$-space $(X, h)$ with end $S^3/\Gamma$ satisfies 
$$ 
\chi(X) = \mathscr{E}(h) + \frac{1}{|\Gamma|}, 
$$ 
where $\Gamma$ is a finite subgroup of $O(4)$ acting freely on $\mathbb{R}^4 - \{0\}$ and $|\Gamma|$ is the order of $\Gamma$. 
If $\chi(X) = 1$, we get, in particular, the following: 
$$ 
\mathscr{E}(h) = 1 - \frac{1}{|\Gamma|}. 
$$ 
$(3)$ \ Bando-Kasue-Nakajima\,\cite{BKN} proved that any Ricci-flat ALE $4$-space $(X, h)$ 
is an ALE $4$-space of order of $4$. 
Moreover, when $(X, h)$ is {\it asymptotically flat} (AF for brevity, cf.\,\cite{Bartnik}), that is, $\Gamma = \{1\}$, 
this combined with a result of  R. Bartnik\,\cite[Theorem\,4.3]{Bartnik} implies that the mass of $(X, h)$ is zero. 
The Positive Mass Theorem\,\cite[Theorem\,4.3]{Sc-1} for AF manifolds then implies that $(X, h)$ is isometric to $(\mathbb{R}^4, g_{\mathbb{E}})$. 
Note that $\mathscr{E}(h) = \mathscr{E}(g_{\mathbb{E}}) = 0$. 
\end{rmk}

Recall again that any Einstein metric $g$ on a closed $4$-manifold $M$ satisfies that $Y(M, [g]) = R_g\sqrt{V_g}$. 
Moreover, if $g$ is a unit-volume Einstein metric with $Y(M, [g]) \geq c\ (c > 0)$, then ${\rm Ric}_g \geq \frac{c}{4}g$.   
Hence, Myers' diameter estimate gives  
$$ 
{\rm diam}(M, g) \leq \frac{2\sqrt{3}\pi}{\sqrt{c}}. 
$$ 

Using this fact and Remark\,3.3-(1), we can now state a modified version of the convergence theorem for Einstein metrics 
due to M. Anderson\,\cite{Anderson}, H.  Nakajima\,\cite{Nakajima-1} and Bando-Kasue-Nakajima\,\cite{BKN}, 
which is the other of the two key results for the proof of Proposition\,1.4. 

\begin{thm}\label{Key-1}
Let $M$ and $\{g_i\}$ be respectively a closed $4$-manifold 
and a sequence of unit-volume positive Einstein metrics on $M$ with $Y(M, [g]) \geq c$ for a fixed $c > 0$. 
Then, there exist a subsequence $\{j\} \subset \{i\}$ and a compact Einstein $4$-orbifold $(M_{\infty}, g_{\infty})$ 
with a finite singular points $\mathcal{S} = \{p_1, p_2, \cdots, p_{\ell}\} \subset M_{\infty}$ $($possibly empty$)$ 
and an orbifold structure group $\Gamma_a \subset O(4)$ around $p_a$ 
for which the following assertions hold\,$:$\\ 
\quad $(1)$\ \ $(M, g_j)$ converges to $(M_{\infty}, g_{\infty})$ in the Gromov-Hausdorff distance. \\ 
\quad $(2)$\ \ There exists a smooth embedding $\phi_j : M_{\infty} - \mathcal{S} \rightarrow M$ for each $j$ such that 
$\phi_j^*g_j$ converges to $g_{\infty}$ in the $C^{\infty}$-topology on $M_{\infty} - \mathcal{S}$. 
If $\mathcal{S}$ is empty, then each $\phi_j$ is a diffeomorphism from $M_{\infty}$ onto $M$. \\ 
\quad $(3)$\ \ For each $p_a \in \mathcal{S}$ and $j$, there exists $p_{a, j} \in M$ and a positive number $r_j$ such that \\
\quad $(3.1)$\ \ $B_{\delta}(p_{a, j}; g_j)$ converges to $B_{\delta}(p_a;g_{\infty})$ in the pointed Gromov-Hausdorff distance for all $\delta > 0$, 
where $B_{\delta}(p_{a, j}; g_j)$ denotes the geodesic ball of radius $\delta > 0$ centered at $p_{a, j}$ with respect to $g_j$. \\ 
\quad $(3.2)$\ \ $\lim_{j \to \infty}r_j = 0$. \\ 
\quad $(3.3)$\ \ $((M, r_j^{-2}g_j), p_{a, j})$ converges to $((X_a, h_a), x_{a, \infty})$ in the pointed Gromov-Hausdorff distance, 
where $(X_a, h_a)$ is a complete, non-compact, Ricci-flat, non-flat ALE $4$-space of order $4$ with 
$$ 
0 < \int_{X_a}|\mathscr{R}_{h_a}|^2d\mu_{h_a} < \infty, 
$$ 
and $x_{a, \infty} \in X_a$. \\
\quad $(3.4)$\ \ There exists smooth embeddings $\Phi_j : X_a \rightarrow M$ such that 
$\Phi_j^*(r_j^{-2}g_j)$ converges to $h_a$ in the $C^{\infty}$-topology on $X_a$. \\ 
\quad $(4)$\ \ It holds that 
$$ 
\lim_{j \to \infty}\int_M|\mathscr{R}_{g_j}|^2d\mu_{g_j} \geq \int_{M_{\infty}}|\mathscr{R}_{g_{\infty}}|^2d\mu_{g_{\infty}} + \sum_a\int_{X_a}|\mathscr{R}_{h_a}|^2d\mu_{h_a}. 
$$ 
\end{thm}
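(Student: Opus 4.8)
The plan is to combine the a priori geometric bounds available along the sequence with an $\varepsilon$-regularity theorem for Einstein $4$-manifolds, Gromov's precompactness, and a blow-up analysis at the singular set. First I would collect the uniform bounds. Since each $g_i$ is Einstein of unit volume with $Y(M,[g_i]) = R_{g_i} \geq c > 0$, we have $\mathrm{Ric}_{g_i} = \frac{R_{g_i}}{4}g_i \geq \frac{c}{4}g_i$, hence a uniform Myers diameter bound $\mathrm{diam}(M, g_i) \leq 2\sqrt{3}\pi/\sqrt{c}$ together with $\mathrm{Vol}(M,g_i) = 1$; moreover $c \leq R_{g_i} \leq 8\sqrt{6}\pi$ gives two-sided Ricci bounds. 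By Remark\,\ref{ALE}-(1) the Chern-Gauss-Bonnet formula yields the uniform energy identity $\int_M |\mathscr{R}_{g_i}|^2 d\mu_{g_i} = 8\pi^2\chi(M)$. Thus the sequence has uniformly bounded diameter, fixed volume, controlled Ricci curvature, and uniformly bounded total curvature energy, which are exactly the hypotheses of the degeneration theory.

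Next I would invoke the $\varepsilon$-regularity estimate. In harmonic coordinates an Einstein metric satisfies an elliptic system and its curvature obeys a Bochner-type inequality, so there is a universal $\varepsilon_1 > 0$ with the property that whenever $\int_{B_r(p;g_i)} |\mathscr{R}_{g_i}|^2 d\mu_{g_i} < \varepsilon_1$ one obtains a pointwise bound $\sup_{B_{r/2}(p)}|\mathscr{R}_{g_i}| \leq C r^{-2}\bigl(\int_{B_r(p)}|\mathscr{R}_{g_i}|^2\bigr)^{1/2}$, and then, by bootstrapping in harmonic charts, full $C^\infty$ control on a smaller ball. Because the total energy is bounded by $8\pi^2\chi(M)$, curvature can concentrate above level $\varepsilon_1$ at only finitely many points; their Gromov-Hausdorff limits form $\mathcal{S} = \{p_1,\dots,p_\ell\}$. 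Away from $\mathcal{S}$ the local energy is small at a definite scale, so the metrics enjoy uniform $C^k$ bounds in harmonic coordinates. Combined with Gromov precompactness (guaranteed by the Ricci and diameter bounds), a diagonal argument extracts a subsequence and embeddings $\phi_j$ for which $\phi_j^* g_j$ converges to an Einstein limit $g_\infty$ in the $C^\infty$-topology on $M_\infty - \mathcal{S}$, proving (1) and (2).

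It then remains to understand the behaviour at each $p_a$, which is the heart of the argument. I would show that the tangent cone of $(M_\infty, g_\infty)$ at $p_a$ is a flat metric cone $\mathbb{R}^4/\Gamma_a$ with $\Gamma_a \subset O(4)$ acting freely on $S^3$, and that $g_\infty$ extends across $p_a$ as a $C^\infty$ orbifold Einstein metric (a removable-singularity result); this supplies the orbifold structure group. For the bubble, choose base points $p_{a,j} \to p_a$ and scales $r_j \to 0$ pinned to the curvature concentration, e.g. $r_j = |\mathscr{R}_{g_j}(p_{a,j})|^{-1/2}$, and consider the rescaled metrics $r_j^{-2}g_j$. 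Their scalar curvature is $r_j^2 R_{g_j} \to 0$, so any pointed limit is Ricci-flat; the same $\varepsilon$-regularity together with the uniform energy bound give smooth convergence on compact sets to a complete, non-compact, Ricci-flat space $(X_a, h_a)$, which is non-flat with $0 < \int_{X_a}|\mathscr{R}_{h_a}|^2 d\mu_{h_a} < \infty$ precisely because the rescaling was chosen at the concentration scale. That $(X_a, h_a)$ is ALE of order $4$ is exactly the decay theorem of Bando-Kasue-Nakajima\,\cite{BKN}. This yields (3.1)--(3.4).

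Finally, for the energy inequality (4): on any compact $K \subset M_\infty - \mathcal{S}$ the $C^\infty$ convergence gives $\int_K |\mathscr{R}_{g_j}|^2 d\mu_{g_j} \to \int_K |\mathscr{R}_{g_\infty}|^2 d\mu_{g_\infty}$, while near each $p_a$ the smooth convergence on compact subsets of $X_a$ captures $\int_{X_a}|\mathscr{R}_{h_a}|^2 d\mu_{h_a}$ in the limit. Since the base pieces and the finitely many rescaled bubble pieces occupy asymptotically disjoint regions of $M$ and curvature is a non-negative density, summing over $K$ and the bubbles and then passing to the limit gives the stated inequality; it is an inequality rather than an equality because energy may a priori be lost in the neck regions interpolating between the base and the bubbles. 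The main obstacle throughout is the $\varepsilon$-regularity and elliptic-bootstrap step yielding $C^\infty$ convergence in harmonic coordinates, together with the Bando-Kasue-Nakajima analysis identifying the precise ALE order of the bubbles; the removable-singularity statement providing the smooth orbifold extension of $g_\infty$ is the other delicate point.
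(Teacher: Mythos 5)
The paper does not prove Theorem~\ref{Key-1} itself: it derives the uniform diameter bound from Myers and the energy bound from Chern--Gauss--Bonnet and then quotes the convergence/degeneration theory of Anderson, Nakajima and Bando--Kasue--Nakajima. Your sketch reconstructs exactly that standard argument (uniform volume/diameter/Ricci/energy bounds, $\varepsilon$-regularity and harmonic-coordinate bootstrap, finite curvature concentration, blow-up at the concentration scale, BKN removable singularity and ALE decay of order $4$, lower semicontinuity of energy), so it is correct and follows essentially the same route as the sources the paper relies on.
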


\begin{rmk}\label{Tree} 
Since $S^3/\Gamma_a$ is smoothly embedded in $M_{\infty}$ around $p_a$ for each $a$, 
it is also smoothly embedded in $M$ and it separates $M$ into two components $V_a, W_a$, 
which are compact $4$-manifolds with boundary. 
More precisely, $M = V_a \cup W_a,\ S^3/\Gamma_a = \partial V_a = \partial W_a = V_a \cap W_a$. 
Here, we choose $V_a$ satisfying $V_a \subset M_{\infty}$. 
The infinity $X_a(\infty) \cong S^3/\widetilde{\Gamma}_a$ of $X_a$ is also smoothly embedded in $M$. 
By the existence of intermediate Ricci-flat ALE $4$-orbifolds in the bubbling tree arising from each singular point $p_a$, 
$\Gamma_a \ne \widetilde{\Gamma}_a$ in general (cf.\,\cite{Bando}, \cite{Nakajima-3}).
\end{rmk}

\section{Proof of Proposition\,1.4}

Let $\{g_i\}$ be a sequence of positive Einstein metrics on $S^4$ with $\{g_i\} \subset \mathcal{E}_{\geq c}(S^4)$ for some $c > 0$. 
We apply Theorem\,\ref{Key-1}, with $M=S^4$, for  the sequence $\{g_i\}$. 
Then, in order to prove Proposition\,1.4, by Theorem\,\ref{Key-1}-(2), it is enough to show that 
the singular set $\mathcal{S}$ is empty. 

From now on, \underline{suppose that $\mathcal{S} \ne \emptyset$}, that is, $\ell \geq 1$. 
By a similar reason to Remark\,\ref{Tree}, the case of $\Gamma_a = \{1\}$ for some $a\ (1 \leq a \leq \ell)$ may be possible logically 
for a general closed $4$-manifold, particularly $\Gamma_a \ = \{1\}$ for all $a = 1, 2, \cdots, \ell$.  
However, at least in the case of $M = S^4$, the following holds. 
We use the notation of Theorem\,\ref{Key-1} and Remark\,\ref{Tree}. 

\begin{lemm}\label{Subkey}
$\Gamma_{a_0} \ne \{1\}$ for some $a_0\ (1 \leq a_0 \leq \ell)$. 
\end{lemm}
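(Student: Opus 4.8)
## Proof Strategy for Lemma 4.1

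The plan is to argue by contradiction: suppose $\Gamma_a = \{1\}$ for \emph{all} $a = 1, 2, \ldots, \ell$, and derive a contradiction with the topology and energy constraints specific to $S^4$. The key tension to exploit is the energy balance in Theorem~\ref{Key-1}-(4) combined with the fact that $\mathscr{E}(g_j) = \chi(S^4) = 2$ for every $j$ (Remark~\ref{ALE}-(1)), together with the rigidity statement in Remark~\ref{ALE}-(3).

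First I would unpack the consequence of assuming $\Gamma_a = \{1\}$. If some bubble has trivial structure group, then the end of the limiting ALE space $(X_a, h_a)$ is $S^3/\widetilde{\Gamma}_a$; but I must be careful, since Remark~\ref{Tree} warns that $\Gamma_a \neq \widetilde{\Gamma}_a$ in general due to intermediate orbifolds in the bubbling tree. The cleanest route is to focus on the \emph{deepest} bubbles in each bubbling tree, where the ALE space is smooth (an actual manifold, not an orbifold). For such a smooth Ricci-flat ALE $4$-space with trivial structure group $\Gamma_a = \{1\}$, the space is asymptotically flat (AF), and Remark~\ref{ALE}-(3) forces $(X_a, h_a)$ to be isometric to $(\mathbb{R}^4, g_{\mathbb{E}})$, hence flat with $\mathscr{E}(h_a) = 0$. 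This directly contradicts the requirement in Theorem~\ref{Key-1}-(3.3) that each bubble is \emph{non-flat} with $\int_{X_a}|\mathscr{R}_{h_a}|^2 d\mu_{h_a} > 0$. So the assumption $\Gamma_a = \{1\}$ at a terminal node of the tree is immediately untenable.

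The main obstacle, then, is handling the bubbling tree structure and ensuring that the trivial-group assumption cannot hide at a non-terminal node while all terminal nodes secretly carry nontrivial groups. I would argue that if every singular point $p_a$ ultimately produces only the trivial group $\widetilde{\Gamma}_a = \{1\}$ at its outermost end, then by the embedding theorem (Theorem~\ref{Key-2}), the separating $3$-manifold $X_a(\infty) \cong S^3/\widetilde{\Gamma}_a = S^3$ bounds on both sides in $S^4$; iterating through the finitely many bubbles and orbifold points, the total defect $\sum_a \mathscr{E}(h_a)$ would have to vanish or the curvature concentration would be incompatible with a genuine bubble forming. More precisely, combining the energy inequality $2 = \lim_j \mathscr{E}(g_j) \geq \mathscr{E}(g_\infty) + \sum_a \mathscr{E}(h_a)$ with the positivity $\mathscr{E}(h_a) > 0$ of each non-flat bubble shows that the only way to have a genuine singularity is for at least one $h_a$ to be a nontrivial (non-flat) ALE space; and the sole nontrivial Ricci-flat AF space is ruled out by the Positive Mass Theorem, so this nontrivial bubble must carry $\widetilde{\Gamma}_a \neq \{1\}$, whence $\Gamma_{a_0} \neq \{1\}$ for the corresponding singular point.

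I expect the delicate point to be tracking the relationship between the orbifold structure group $\Gamma_a$ (which governs the singularity in $M_\infty$ and hence the separating sphere quotient $S^3/\Gamma_a = \partial V_a$ embedded in $S^4$) and the asymptotic group $\widetilde{\Gamma}_a$ of the bubble. The lemma asks specifically about $\Gamma_a$, so I would close the argument by noting that a nontrivial bubble sits inside a neighborhood whose boundary in $M_\infty$ is exactly $S^3/\Gamma_{a_0}$; if this were $S^3$ (i.e., $\Gamma_{a_0} = \{1\}$), the point $p_{a_0}$ would be a smooth orbifold point, the local structure would be that of a manifold, and the only Ricci-flat ALE model with a smooth single-sheeted end is flat $\mathbb{R}^4$ by the Positive Mass Theorem — contradicting non-flatness of the bubble. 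This forces $\Gamma_{a_0} \neq \{1\}$, completing the argument.
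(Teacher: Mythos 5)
Your proposal has a genuine gap, and it sits exactly at the point you yourself flag as ``delicate'': the distinction between the orbifold structure group $\Gamma_{a}$ at $p_{a}\in M_\infty$ and the group $\widetilde{\Gamma}_{a}$ at infinity of the bubble $X_{a}$. Your closing argument asserts that if $\Gamma_{a_0}=\{1\}$ then the bubble has a ``smooth single-sheeted end'' and is therefore flat by the Positive Mass Theorem. That inference is not valid: by Remark\,\ref{Tree}, $\Gamma_{a}\ne\widetilde{\Gamma}_{a}$ in general because of intermediate Ricci-flat ALE orbifolds in the bubbling tree. Even when the outermost link is $S^3$ (i.e.\ $\Gamma_{a_0}=\{1\}$ and $p_{a_0}$ is a removable singularity of $g_\infty$), the deepest \emph{smooth} bubble guaranteed by Theorem\,\ref{Key-1}-(3.3) can have end $S^3/\widetilde{\Gamma}_{a_0}$ with $\widetilde{\Gamma}_{a_0}\ne\{1\}$, and the rigidity in Remark\,\ref{ALE}-(3) (which applies only to smooth AF \emph{manifolds}) says nothing about it. So assuming $\Gamma_a=\{1\}$ for all $a$ does not by itself kill the bubbles; indeed the paper explicitly notes this scenario is ``possible logically'' on a general closed $4$-manifold.

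The ingredient you are missing is a computation of $\mathscr{E}(g_\infty)$; without it the inequality $2\ge\mathscr{E}(g_\infty)+\sum_a\mathscr{E}(h_a)$ yields no contradiction (e.g.\ nothing you wrote rules out $\mathscr{E}(g_\infty)=1$ with one bubble of energy $7/8$). The paper's proof proceeds globally: if $\Gamma_a=\{1\}$ for all $a$, each link is a genuine $S^3$ smoothly embedded in $S^4$; a Mayer--Vietoris argument gives $\chi(V_a)=\chi(W_a)=1$ for the two pieces it separates, and capping the relevant pieces with $\overline{D^4}$ successively shows $\chi(M_\infty)=2$. The removable singularities theorem makes $(M_\infty,g_\infty)$ a smooth closed Einstein manifold, so the Chern--Gauss--Bonnet identity of Remark\,\ref{ALE}-(1) forces $\mathscr{E}(g_\infty)=\chi(M_\infty)=2$, which already saturates the total energy $2$ and leaves no room for any non-flat bubble with $\mathscr{E}(h_a)>0$. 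That Euler-characteristic count, which uses the hypothesis $M=S^4$ in an essential way, is the heart of the lemma and is absent from your proposal.
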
 

\begin{proof} 
Suppose that $\Gamma_a = \{1\}$ for all $a = 1, 2, \cdots, \ell$.  
As mentioned in Remark\,\ref{Tree}, a smooth embedded $S^3$ around $p_1 \in S^4$ separates $S^4$ into 
two components $V_1, W_1$ of compact $4$-manifolds with boundary satisfying 
$$ 
S^4 = V_1 \cup W_1,\quad S^3 = \partial V_1 = \partial W_1 = V_1 \cap W_1,\quad V_1 \subset S^4_{\infty}. 
$$ 
By the Mayer-Vietoris exact sequence of homology groups for $(S^4; V_1, W_1)$, 
one can get 
$$ 
H_i(V_1; \mathbb{R}) = H_i(W_1; \mathbb{R}) = 0\qquad {\rm for}\ \ i = 1, 2, 3, 
$$ 
and hence $\chi(V_1) = \chi(W_1) = 1$. 
Let $S^4_1 := V_1 \cup_{S^3} \overline{D^4}$ be a closed smooth $4$-manifold obtained by gluing along $S^3 = \partial V_1 = \partial\overline{D^4}$, 
where $\overline{D^4}$ denotes the closed $4$-ball in $\mathbb{R}^4$. 
Note that $\chi(S^4_1) = 2$. 
Similar to the above, a smooth embedded $S^3$ around $p_2 \in S^4_1$ separates $S^4_1$ into two components $V'_2, W'_2$. 
Then, the closed smooth $4$-manifold $S^4_2 := V'_2 \cup_{S^3} \overline{D^4}$ also satisfies that $\chi(S^4_2) = 2$. 
Repeating a similar procedure up to $a = \ell$, we get finally a closed smooth $4$-manifold $S^4_{\ell} := V'_{\ell} \cup_{S^3} \overline{D^4}$ with $\chi(S^4_{\ell}) = 2$. 
By construction, $S^4_{\ell}$ is homeomorphic to $S^4_{\infty}$ which implies that $\chi(S^4_{\infty}) = 2$. 

By the removable singularities theorem for Einstein metrics \cite[Theorem\,5.1]{BKN}, 
we note that $(S^4_{\infty}, g_{\infty})$ is a closed {\it smooth} Einstein $4$-manifold. 
Combining this with $\chi(S^4_{\infty}) = 2$, we get that $\mathscr{E}(g_{\infty}) = 2$. 
However, each Ricci-flat ALE $4$-space $(X_a, h_a)$ bubbling out from $p_a$ has a positive energy $\mathscr{E}(h_a) > 0$. 
This, combined with Theorem\,\ref{Key-1}-(4) leads to a contradiction: 
$$ 
2 = \lim_{j \to \infty}\mathscr{E}(g_j) \geq \mathscr{E}(g_{\infty}) + \sum_a\mathscr{E}(h_a) > 2. 
$$ 
Therefore, $\Gamma_{a_0} \ne \{1\}$ for some $a_0\ (1 \leq a_0 \leq \ell)$. 
\end{proof} 

We can now prove Proposition\,1.4. 

\begin{proof}[Proof of Proposition\,1.4] 
For simplicity, we assume that $a_0 = 1$. 
It then follows from Theorem\,\ref{Key-2} that $\Gamma_1 = Q_8$. 
By Remark\,\ref{ALE}-(3), we also obtain that $\widetilde{\Gamma}_1 = Q_8$. 
Even if $\widetilde{\Gamma}_a = Q_8$ for some $a$, a similar Mayer-Vietoris argument to that in the proof of Lemma\,\ref{Subkey} still holds, 
and so $\chi(X_1) = 1$. 
It then follows from Remark\,\ref{ALE}-(2) that 
$$ 
\mathscr{E}(h_1) = \chi(X_1) - \frac{1}{|Q_8|} = 1- \frac{1}{8} = \frac{7}{8}. 
$$ 

By the signature theorem for compact $4$-orbifolds (cf.\,\cite[(4.5)]{Nakajima-2}) and the calculation 
of eta invariant $\eta_S(S^3/\Gamma)$ for the signature operator \cite[Section\,3]{Hitchin-2}, 
the compact Einstein $4$-orbifold $(S^4_{\infty}, g_{\infty})$ satisfies that 
$$ 
\tau(S^4_{\infty}) = \frac{1}{12\pi^2}\int_{S^4_{\infty}}\Big{(} |W_{g_{\infty}}^+|^2 - |W_{g_{\infty}}^-|^2 \Big{)}d\mu_{g_{\infty}} 
- \sum_{a=1}^{\ell}\eta_S(S^3/\Gamma_a),\quad \eta_S(S^3/Q_8) = \frac{3}{4}, 
$$ 
where $\tau(S^4_{\infty})$ denotes the signature of $S^4_{\infty}$. 
Since $H_2(S^4_{\infty}; \mathbb{R}) = 0$, we have that $H^2(S^4_{\infty}; \mathbb{R}) = 0$, and so $\tau(S^4_{\infty}) = 0$. 
Combining that $R_{g_{\infty}} \geq c > 0$ and $\eta_S(S^3/\Gamma_a) \geq 0$ with the above and Theorem\,\ref{Key-1}-(4), we then obtain that 
$$ 
\frac{9}{8} = 2 - \frac{7}{8} \geq \mathcal{E}(g_{\infty}) = \frac{1}{8\pi^2}\int_{S^4_{\infty}}\Big{(} |W_{g_{\infty}}|^2 + \frac{R_{g_{\infty}}^2}{24} \Big{)}d\mu_{g_{\infty}} 
> \frac{1}{8\pi^2}\int_{S^4_{\infty}}|W_{g_{\infty}}|^2d\mu_{g_{\infty}} 
$$ 
$$ 
\qquad \quad \geq \frac{1}{8\pi^2}\int_{S^4_{\infty}}|W_{g_{\infty}}^+|^2d\mu_{g_{\infty}} \geq \frac{3}{2}\Big{(} \frac{3}{4} 
+ \frac{1}{12\pi^2}\int_{S^4_{\infty}}|W_{g_{\infty}}^-|^2d\mu_{g_{\infty}} \Big{)} \geq \frac{9}{8}, 
$$ 
and hence it leads a contradiction. 
Therefore, $\mathcal{S} = \emptyset$.  
\end{proof} 

As mentioned in Remark of $\S$\,1, we describe some details on the topological argument. 

\begin{rmk}\label{Counter} 
Let $N_2$ be the nonorientable disk bundle over the the real projective plane $\mathbb{RP}^2$ with Euler number $2$. 
Let $T_4$ be the disk bundle of the complex line bundle over $S^2$ of degree $4$. 
Then, the natural double cover $T_4 \rightarrow N_2$ is the universal cover of $N_2$. 
Note that $S^4 = N_2 \cup_{\partial N_2}N_2$ and $\partial N_2 = S^3/Q_8$ (see \cite{Lawson} for details). 
Note also that $N_2$ is orientable since $N_2$ can be smoothly embedded in $S^4$ as a compact $4$-submanifold. 
Moreover, we have the following: 
$$ 
\ H_1(N_2; \mathbb{Z}) = \mathbb{Z}_2,\quad H_i(N_2; \mathbb{Z}) = 0\ \ (i = 2, 3, 4), 
$$ 
$$ 
H_2(T_4; \mathbb{Z}) = \mathbb{Z},\quad \ \ H_i(T_4; \mathbb{Z}) = 0\ \  (i = 1, 3, 4). 
$$
We do not know whether the orientable open $4$-manifold ${\rm Int}(N_2)$ admits a Ricci-flat ALE metric or not. 
(We have proved here only that such a metric never appears as a bubbling off Ricci-flat ALE metric 
from a sequence in $\mathscr{E}_{\geq c}(S^4)$.) 
However, $N_2$ becomes an orientable counterexample to the topological arguments in the proofs of \cite[Lemma\,6.3]{Anderson} and \cite[Proposition\,3.10]{Anderson-GAFA}.  
\end{rmk} 

\vspace{10mm} 

\bibliographystyle{amsbook}

\vspace{10mm} 

\end{document}